\crefname{section}{Section}{Sections}
\crefname{subsection}{\S}{\S\S}
\theoremstyle{plain}
\newtheorem{lemma}{Lemma}[section]
\newtheorem{proposition}[lemma]{Proposition}
\newtheorem{theorem}[lemma]{Theorem}
\theoremstyle{nonumberplain}
\theoremstyle{plain}
\newtheorem{definition}[lemma]{Definition}
\newtheorem{remark}[lemma]{Remark}
\crefname{definition}{definition}{definitions}
\crefname{ex}{example}{examples}
\crefname{remark}{remark}{remarks}
\crefname{convention}{convention}{conventions}
\crefname{claim}{claim}{claims}
\crefname{conjecture}{conjecture}{conjectures}
\crefname{lemma}{lemma}{lemmas}
\crefname{proposition}{proposition}{propositions}
\crefname{question}{question}{questions}
\crefname{corollary}{corollary}{corollaries}
\crefname{theorem}{theorem}{theorems}
\crefname{assumption}{assumption}{Assumptions}
\crefname{equation}{}{}
\theoremstyle{nonumberplain}
\newtheorem{proof}{Proof}
\def\polhk#1{\setbox0=\hbox{#1}{\ooalign{\hidewidth
    \lower1.5ex\hbox{`}\hidewidth\crcr\unhbox0}}}
\newcommand{\bes}{\begin{equation*}}
\newcommand{\ees}{\end{equation*}}
\newcommand{\be}{\begin{equation}}
\newcommand{\ee}{\end{equation}}
\begin{document}

\baselineskip=15pt

\title{On morphisms of topological quivers}
\author{
  Mariusz Tobolski
}

\date{}

\newcommand{\Addresses}{{
  \bigskip
  \footnotesize

  \textsc{Instytut Matematyczny, Uniwersytet Wroc\l{}awski, pl. Grunwaldzki 2/4, 50-384 Poland}\par\nopagebreak \textit{E-mail address}:
  \texttt{mariusz.tobolski@math.uni.wroc.pl}
}}

\maketitle

\begin{abstract}
We introduce regular morphisms of topological quivers and show that they give rise to a subcategory of the category of topological quivers and quiver morphisms. Our regularity conditions render the topological quiver C*-algebra construction a contravariant functor from the category of topological quivers and regular morphisms into the category of C*-algebras and $*$-homomorphisms. 
\end{abstract}

\noindent {\em Key words: topological quiver, Cuntz--Pimsner algebra, C*-correspondence, graph C*-algebra, factor map, morphism of quivers} 

\vspace{.5cm}

\noindent{MSC: 46L55, 46L08}


\section{Introduction}

Directed graphs and their topological generalizations provide a rich framework for constructing operator algebras with deep connections to dynamics, geometry, and representation theory. Classical graph C*-algebras, first introduced in the context of discrete graphs by Kumjian, Pask, Raeburn, and Renault in~\cite{kprr97}, have been extensively studied and classified in terms of graph-theoretic data (see, e.g., \cite{raeburn}). The notion of a \emph{topological quiver}, pioneered by Muhly and Tomforde \cite{mt-05}, extends this theory by allowing vertex and edge spaces to carry locally compact Hausdorff topologies, with source and range maps continuous and structure maps given by a continuous family of measures.

While the construction and analysis of C*-algebras associated to topological quivers have been developed in recent years, the study of \emph{morphisms} between topological quivers has received less attention. Morphisms provide functoriality, enabling one to compare and relate the C*-algebras of different quivers, to construct equivalences and dualities, and to transfer structural properties across categories. This paper initiates a systematic exploration of morphisms of topological quivers, aiming to identify the right notion. Appropriate morphisms of topological graphs of Katsura~\cite{katsura1}, which are special kinds of topological quivers, were defined in~\cite{katsura2}. The special case of an isomorphism of topological quivers was considered in~\cite{l-h24}.

Our main contributions are as follows:
\begin{enumerate}[\quad(1)]
\item We introduce the notion of a \emph{regular quiver morphism} between topological quivers, requiring compatible continuous maps on vertex and edge spaces that intertwine the source, range, and measure data (Definition~\ref{admquiv}).
\item We show that regular quiver morphisms compose naturally and form a category $\mathbf{Q}_{\rm reg}$ (Proposition~\ref{qcat}). We also prove that our regular quiver morphisms generalize regular factor maps of Katsura (Section~\ref{apptop}).
\item Finally, we demonstrate that regular quiver morphism induce $*$-homomorphisms between the associated C*-algebras (Theorem~\ref{mainthm}).
\end{enumerate}

Throughout, all topological spaces are assumed to be locally compact, Hausdorff, and second countable. We denote by $C_0(X)$ the algebra of continuous complex-valued functions vanishing at infinity on a space $X$.


\section{Preliminaries}

We refer the reader to~\cite{gj-m90} for an overview of C*-algebra theory. In what follows, we make use of the following categories:
\[
{\bf C}^*:~\text{C*-algebras and $*$-homomorphisms},
\]
\[
{\bf C}^*_\mathbb{T}:~\text{$\mathbb{T}$-C*-algebras and $\mathbb{T}$-equivariant $*$-homomorphisms}.
\]
Here a $\mathbb{T}$-C*-algebra is a C*-algebra $A$ equipped with a circle (continuous) action $\alpha:\mathbb{T}\to {\rm Aut}(A)$. A $*$-homomorphism between $\mathbb{T}$-C*-algebras is $\mathbb{T}$-equivariant if it intertwines the respective actions.

\subsection{C*-correspondences and Cuntz--Pimsner algebras}

The general theory of Hilbert modules and C*-correspondences is presented in detail in~\cite{ec-l95}. The foundational idea of associating C*-algebras to C*-correspondences originates from Pimsner’s work~\cite{pimsner}. In this paper, we follow the approach developed in~\cite{katsura0} and~\cite{rs-11}.

A \emph{$C^*$-correspondence} over a $C^*$-algebra $A$ consists of a (right) Hilbert $A$-module $X$ equipped with a~nondegenerate left action of $A$, namely, a $*$-homomorphism $\varphi_X: A \to \mathcal{L}(X)$, where $\mathcal{L}(X)$ denotes the algebra of adjointable operators on $X$. We often write $(X,A)$ to indicate that $X$ is a~correspondence over $A$, and denote the left action of $a \in A$ on $\xi \in X$ by $a \cdot \xi := \varphi_X(a)(\xi)$. 

The C*-algebra $\mathcal{K}(X)$ of \emph{compact operators on $X$} is defined as the closed linear span of rank-one operators $\theta_{\xi,\eta}$ given by
\[
\theta_{\xi,\eta}(\zeta) := \xi \cdot \langle \eta, \zeta \rangle, \qquad \zeta \in X, \quad \xi, \eta \in X.
\]
This forms a closed two-sided ideal in $\mathcal{L}(X)$. An important ideal associated to a $C^*$-correspondence $X$ is
\[
J_X := \left\{ a \in A : \varphi_X(a) \in \mathcal{K}(X) \ \text{and} \ ab = 0 \ \text{for all} \ b \in \ker \varphi_X \right\},
\]
which plays a key role in the construction of the associated Cuntz--Pimsner algebra.

Finally, let $X$ and $Y$ be $C^*$-correspondences over $C^*$-algebras $A$ and $B$, respectively, and suppose that $\psi: X \to Y$ is a continuous linear map. Then one can define a $*$-homomorphism $\psi^{(1)}: \mathcal{K}(X) \to \mathcal{K}(Y)$ on rank-one operators by
\[
\psi^{(1)}(\theta_{\xi, \eta}) := \theta_{\psi(\xi), \psi(\eta)}.
\]

\begin{definition}\label{cormor}(\cite[Definition~2.3]{rs-11}, cf.~\cite[Definition~2.3]{katsura01})
Let $(X,A)$ and $(Y,B)$ be C*-cor\-re\-spon\-dences. A pair $(\psi,\pi)$ consisting of a linear map $\psi:X\to Y$ and a~$*$-ho\-mo\-morphism $\pi:A\to B$ is a~{\em correspondence morphism} from $X$ to $Y$ if
\begin{enumerate}
\item[(C1)] $\psi(a\cdot\xi)=\pi(a)\cdot\psi(\xi)$ for all $a\in A$ and $\xi\in X$,
\item[(C2)] $\pi(\langle\xi,\eta\rangle)=\langle\psi(\xi),\psi(\eta)\rangle$ for all $\xi,\eta\in X$.
\end{enumerate}
A correspondence morphism $(\psi,\pi)$ is called {\em covariant} if
\begin{enumerate}
\item[(C3)] $\pi(J_X)\subseteq J_Y$, and
\item[(C4)] $\varphi_Y(\pi(a))=\psi^{(1)}(\varphi_X(a))$ for all $a\in J_X$.
\end{enumerate}
\end{definition}
We obtain the following categories:
\[
{\bf Corr}^*:\text{C*-correspondences and correspondence morphisms},
\]
\[
{\bf Corr}^*_{\rm cov}:\text{C*-correspondences and covariant correspondence morphisms}.
\]
Every C*-algebra $A$ can be viewed as a C*-correspondence over itself with the inner product defined by the formula $\langle a,b\rangle:=a^*b$ and the left and right actions given by the left and right multiplication on $A$, respectively. Furthermore, $\mathcal{K}(A)\cong A$ via a canonical isomorphism. Hence, the category ${\bf C}^*$ can be viewed as a subcategory of both ${\bf Corr}^*$ and ${\bf Corr}^*_{\rm cov}$.

A~{\em (covariant) representation} of a C*-correspondence $X$ over $A$ on a C*-algebra $B$ is a (covariant) correspondence morphism from $X$ to a C*-algebra $B$ viewed as a C*-correspondence over itself.
\begin{definition}(\cite[Definition~3.5]{katsura0}, cf.~\cite{pimsner})
The {\em Toeplitz--Pimsner algebra} $\mathcal{T}_X$ associated to a~correspondence $X$ over $A$ is the C*-algebra $C^*(t_X,t_A)$ generated by the universal representation $(t_X,t_A)$. The {\em Cuntz--Pimsner algebra} $\mathcal{O}_X$ associated to a correspondence $X$ over $A$ is the C*-algebra $C^*(u_X,u_A)$ generated by the universal covariant representation $(u_X,u_A)$ of $X$. 
\end{definition}
By universality, for every covariant representation $(\psi,\pi)$ of $X$ on $B$, there is a~$*$-ho\-mo\-mor\-phism $\mathcal{O}_{\psi,\pi}:\mathcal{O}_X\to B$ onto $C^*(\psi,\pi)$ such that $\mathcal{O}_{\psi,\pi}\circ u_X=\psi$ and $\mathcal{O}_{\psi,\pi}\circ u_A=\pi$. The formulas
\begin{equation}\label{cuntzgauge}
\alpha_z(u_A(a))=u_A(a),\qquad \alpha_z(u_X(\xi))=zu_X(\xi),\qquad z\in\mathbb{T},\quad a\in A,\quad \xi\in X,
\end{equation}
give rise to a circle action $\alpha:\mathbb{T}\to{\rm Aut}(\mathcal{O}_X)$ on the Cuntz--Pimsner algebra of a C*-correspondence $X$ over $A$, called the~{\em gauge action}.

\begin{proposition}{\rm (\cite[Proposition 2.9 and 2.10]{rs-11})}\label{quivcorrfunct}
Let $(\psi,\pi)$ be a covariant correspondence morphism from $(X,A)$ to $(Y,B)$. Then, there exists a unique gauge-equivariant $*$-homomorphism $\Psi:\mathcal{O}_X\to\mathcal{O}_Y$ such that \[
\Psi(u_X(\xi))=u_Y(\psi(\xi)),\quad\xi\in X,\qquad \Psi(u_A(a))=u_B(\pi(a)),\qquad a\in A.
\] 
Furthermore, the assignments $(X,A)\mapsto\mathcal{O}_X$ and $(\psi,\pi)\mapsto\Psi$ give rise to a covariant functor from the category ${\bf Corr}^*_{\rm cov}$ to the category ${\bf C}^*_\mathbb{T}$.
\end{proposition}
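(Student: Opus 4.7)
The plan is to construct $\Psi$ via the universal property of $\mathcal{O}_X$. Set $\psi' := u_Y \circ \psi$ and $\pi' := u_B \circ \pi$, both landing in $\mathcal{O}_Y$, which I regard as a C*-correspondence over itself. If $(\psi', \pi')$ is a covariant representation of $(X,A)$ on $\mathcal{O}_Y$, then universality yields a unique $*$-homomorphism $\Psi : \mathcal{O}_X \to \mathcal{O}_Y$ with $\Psi \circ u_X = \psi'$ and $\Psi \circ u_A = \pi'$, which is exactly the content of the first sentence of the statement.

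The first pair of axioms is routine. Conditions (C1) and (C2) for $(\psi', \pi')$ follow by chaining the corresponding identities for $(\psi, \pi)$ with those for the representation $(u_Y, u_B)$: for instance, $\psi'(a \cdot \xi) = u_Y(\pi(a) \cdot \psi(\xi)) = u_B(\pi(a)) \cdot u_Y(\psi(\xi)) = \pi'(a) \cdot \psi'(\xi)$, and (C2) is analogous. The ideal-containment condition (C3) for $(\psi', \pi')$ is automatic since $\mathcal{O}_Y$ viewed as a correspondence over itself satisfies $J_{\mathcal{O}_Y} = \mathcal{O}_Y$ (via $\mathcal{K}(\mathcal{O}_Y) \cong \mathcal{O}_Y$ and the faithful canonical left action).

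The crux is (C4) for $(\psi', \pi')$. For $a \in J_X$, I claim
\[
\varphi_{\mathcal{O}_Y}(\pi'(a)) = u_B(\pi(a)) = u_Y^{(1)}(\varphi_Y(\pi(a))) = u_Y^{(1)}(\psi^{(1)}(\varphi_X(a))) = (\psi')^{(1)}(\varphi_X(a)),
\]
where the first equality uses $\mathcal{K}(\mathcal{O}_Y) \cong \mathcal{O}_Y$, the second uses covariance of $(u_Y, u_B)$ at $\pi(a) \in J_Y$ (which belongs to $J_Y$ by (C3) for $(\psi, \pi)$), the third uses (C4) for $(\psi, \pi)$, and the fourth uses the functoriality identity $(u_Y \circ \psi)^{(1)} = u_Y^{(1)} \circ \psi^{(1)}$. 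The main obstacle lives in this last identity: one must verify that $\psi^{(1)}$, initially defined only on rank-one operators by $\theta_{\xi,\eta} \mapsto \theta_{\psi(\xi), \psi(\eta)}$, extends to a bounded $*$-homomorphism $\mathcal{K}(X) \to \mathcal{K}(Y)$. This extension relies on (C2) to bound $\|\psi(\xi)\| \leq \|\xi\|$ (via $\|\psi(\xi)\|^2 = \|\pi(\langle \xi, \xi \rangle)\| \leq \|\xi\|^2$) together with the standard Hilbert-module estimate $\|\sum_i \theta_{\xi_i, \eta_i}\|^2 \leq \|\sum_i \theta_{\xi_i, \xi_i}\| \cdot \|\sum_i \theta_{\eta_i, \eta_i}\|$.

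With covariance established, universality produces $\Psi$. Gauge-equivariance is checked on generators using~\eqref{cuntzgauge}: both $\Psi \circ \alpha_z$ and $\alpha_z \circ \Psi$ send $u_A(a) \mapsto u_B(\pi(a))$ and $u_X(\xi) \mapsto z\, u_Y(\psi(\xi))$, so they agree on the generating set and hence on all of $\mathcal{O}_X$. Functoriality is then formal: $(\id_X, \id_A)$ induces the identity on $\mathcal{O}_X$ by the uniqueness clause, and for a composable pair $(\psi_1, \pi_1)$, $(\psi_2, \pi_2)$ the composite $\Psi_2 \circ \Psi_1$ satisfies the universal-property identities characterizing the $*$-homomorphism associated to $(\psi_2 \circ \psi_1, \pi_2 \circ \pi_1)$, so by uniqueness they coincide.
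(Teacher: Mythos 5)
The paper does not prove this proposition itself — it is imported verbatim from \cite[Propositions 2.9 and 2.10]{rs-11} — and your argument is exactly the standard one given there: check that $(u_Y\circ\psi,\,u_B\circ\pi)$ is a covariant representation of $(X,A)$ on $\mathcal{O}_Y$, invoke universality, and verify gauge-equivariance and functoriality on generators. Your proof is correct, including the correct identification of the one genuinely nontrivial ingredient (that $\psi^{(1)}$ extends to a $*$-homomorphism $\mathcal{K}(X)\to\mathcal{K}(Y)$, so that $(u_Y\circ\psi)^{(1)}=u_Y^{(1)}\circ\psi^{(1)}$ makes sense), so nothing further is needed.
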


\subsection{Topological quivers}

We refer the reader to~\cite{mt-05} for an extensive study of topological quivers and their C*-algebras. A~{\em topological quiver} $E$ is a~quintuple $(E^0,E^1,s_E,r_E,\lambda_E)$, where $E^0$ and $E^1$ are second-countable locally compact Hausdorff spaces, $s_E,r_E:E^1\to E^0$ are continuous maps such that $r_E$ is open, and $\lambda_E=\{\lambda_E^v\}_{v\in E^0}$ is a family of Radon measures on $E^1$ satisfying the following conditions:
\begin{enumerate}
\item ${\rm supp}\,\lambda_E^v=r_E^{-1}(v)$ for all $v\in E^0$,
\item the function 
\[
v\longmapsto \int_{r_E^{-1}(v)}\xi(e)d\lambda^v_E(e)
\]
belongs to $C_c(E^0)$ for all $\xi\in C_c(E^1)$.
\end{enumerate} 
We call $E^0$ the {\em vertex space}, $E^1$ the {\em edge space}, and $s_E$ and $r_E$ the {\em source} and the {\em range} map, respectively. Since $E^0$ and $E^1$ are second-countable locally compact Hausdorff spaces and $r_E$ is an open map, a system of measures as above (often called an $r_E$-system) always exists by~\cite[Corollary B.18]{d-w19} but it is not unique. In fact, one may chose the measures $\lambda^v_E$ to be probability measures. However, it is not always desirable to do this, for instance, when dealing with counting measures. Examples of topological quivers include directed graphs, topological graphs~\cite{katsura1}, locally compact groupoids~\cite{j-r80}, branched coverings~\cite{dm-01}, and topological relations~\cite{b-b04}.

One defines the following subsets of $E^0$:
\begin{align*}
E_{\rm sink}^0=&E^0\setminus \overline{s_E(E^1)},\\
E_{\rm fin}^0=&\{v\in E^0~:~\text{there is a precompact neighborhood $V$ of $v$ such that}\\
&~~s_E^{-1}(\overline{V})\text{ is compact and $r_E|_V$ is a local homeomorphism}\},\\
E_{\rm reg}^0=&E^0_{\rm fin}\setminus \overline{E^0_{\rm sink}},\qquad E_{\rm sing}^0=E^0\setminus E^0_{\rm reg}.
\end{align*}
The elements of $E_{\rm sink}^0$ are called {\em sinks}, the elements of $E_{\rm reg}^0$ are called {\em regular}, and those of $E^0_{\rm sing}$ are called {\em singular}.

Topological quivers give rise to C*-correspondences as follows. Let $A_E:=C_0(E^0)$ and define an $A$-valued inner product on $C_c(E^1)$ by the formula
\[
\langle\xi,\eta\rangle(v):=\int_{r_E^{-1}(v)}\overline{\xi(e)}\eta(e)d\lambda^v_E(e).
\]
Next, let $X_E$ denote the closure of $C_c(E^1)$ with respect to the norm $\|\xi\|:=\sup_{v\in E^0}\sqrt{\langle\xi,\xi\rangle(v)}$. Define the left and right actions of $A_E$ on $X_E$ by setting
\[
(\xi\cdot f)(e):=\xi(e)f(r_E(e)),\qquad e\in E^1,\quad \xi\in C_c(E^1),\quad f\in C_0(E^0),
\]
\[
(\varphi_E(f)\xi)(e)=(f\cdot\xi)(e):=f(s_E(e))\xi(e),\qquad e\in E^1,\quad\xi\in C_c(E^1),\quad f\in C_0(E^0),
\]
and extending it to $X_E$. The pair $(X_E,A_E)$ is the {\em quiver correspondence} associated with the topological quiver $E$. The {\em topological quiver C*-algebra} $C^*(E)$ is the Cuntz--Pimsner algebra associated to the C*-correspondence $(X_E,A_E)$.


\section{Functoriality of C*-algebras of topological quivers}

\subsection{Morphisms of topological quivers}\label{morquiv}

The notion of a graph homomorphism can be generalized to topological quivers almost verbatim.
\begin{definition}
A {\em quiver morphism} $m=(m^1,m^0)$ between topological quivers $E$ and $F$ consists of continuous maps $m^1:E^1\to F^1$ and  $m^0:E^0\to F^0$ making the diagrams
\begin{equation}\label{srdiag}
\begin{tikzcd}
E^1 \arrow[r,"s_E"] \arrow[d,"m^1"'] & E^0\arrow[d,"m^0"]\\
F^1 \arrow[r,"s_F"] & F_0 
\end{tikzcd}
\qquad\text{and}\qquad
\begin{tikzcd}
E^1 \arrow[r,"r_E"] \arrow[d,"m^1"'] & E^0\arrow[d,"m^0"]\\
F^1 \arrow[r,"r_F"] & F_0 
\end{tikzcd}
\end{equation}
commute.
\end{definition}
It is clear that we obtain a category:
\[
{\bf Q}: \text{topological quivers and quiver morphisms}.
\]

To define a special type of quiver morphisms generalizing admissible graph homomorphisms~\cite[Section~2]{th-24} and factor maps~\cite[Definition~2.6]{katsura2}, we need to recall some definitions and notation:
\begin{itemize}
\item A continuous map $f:X\to Y$ between locally compact spaces is {\em proper} if $f^{-1}(K)$ is compact for every compact $K\subseteq Y$.
\item  If $f:X\to Y$ is a measurable map between measurable spaces, then let $f_*(\mu)$ denote the pushforward of a measure $\mu$ on $X$ along the map $f$.
\item Let $f:X\to Z$ and $g:Y\to Z$ be continuous maps. Then the space
\[
X{}_f\times_gY:=\{(x,y)\in X\times Y~:~f(x)=g(y)\}\subseteq X\times Y,
\]
together with the maps $(x,y)\mapsto x$ and $(x,y)\mapsto y$ is the pullback of $f$ and $g$ in the category of topological spaces.
\end{itemize}
We are now ready to introduce regular morphisms.
\begin{definition}\label{admquiv}
Let $m:E\to F$ be a quiver morphism. We say that $m$ is {\em regular} if
\begin{enumerate}
\item[(A1)] $m^1$ and $m^0$ are proper,
\item[(A2)] $m^1|_{r_E^{-1}(v)}$ is injective and $\lambda^{m^0(v)}_F=m_*^1(\lambda_E^v)$ for all $v\in E^0$,
\item[(A3)] $(m^0)^{-1}(F^0_{\rm reg})\subseteq E^0_{\rm reg}$.
\end{enumerate}
\end{definition}
Due to the condition (A2), we have the following equality
\begin{equation}\label{a2con}
\int_{r_F^{-1}(m^0(v))}\xi(x)d\lambda^{m^0(v)}_F(x)=\int_{r_E^{-1}(v)}\xi(m^1(e))d\lambda_E^v(e)
\end{equation}
for every $v\in E^0$ and $\xi\in C_c(F^1)$, which we will use throughout the paper. 
\begin{remark}
Our notion of a regular morphism generalizes regular factor maps of Katsura~\cite[Definition~2.6]{katsura2} and admissible graph homomorphism in the sense of~\cite[Section~2]{th-24}. See Section~\ref{apptop} for details. We also note that the second part of the condition (A2) is analogous to~\cite[Definition~3.1]{ag-19} introduced in the context of locally compact groupoids.
\end{remark}

\begin{proposition}\label{qcat}
Topological quivers together with regular quiver morphisms form a subcategory of the category ${\bf Q}$.
\end{proposition}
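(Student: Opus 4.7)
The plan is to verify the two axioms of a subcategory: that identities are regular morphisms, and that the composition of two regular morphisms is again a regular morphism. Both reduce to standard checks once we unpack the definition.

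For identities, the identity quiver morphism $(\id_{E^1},\id_{E^0})$ satisfies (A1) trivially since $\id$ is proper; it satisfies (A2) since $\id|_{r_E^{-1}(v)}$ is injective and $(\id)_*\lambda_E^v=\lambda_E^v$; and (A3) collapses to $E^0_{\rm reg}\subseteq E^0_{\rm reg}$. So the identity on any topological quiver is regular.

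For composition, let $m\colon E\to F$ and $n\colon F\to G$ be regular quiver morphisms; set $k:=n\circ m$, i.e.\ $k^1:=n^1\circ m^1$ and $k^0:=n^0\circ m^0$. That $k$ is a quiver morphism (in particular that the source/range squares in \eqref{srdiag} commute for $k$) follows by pasting the commutative squares for $m$ and $n$. Axiom (A1) follows because a composition of proper maps between locally compact Hausdorff spaces is proper: $k^{-1}(K)=m^{-1}(n^{-1}(K))$ is compact for compact $K$. For (A3), we chain the inclusions
\[
(k^0)^{-1}(G^0_{\rm reg})=(m^0)^{-1}\bigl((n^0)^{-1}(G^0_{\rm reg})\bigr)\subseteq (m^0)^{-1}(F^0_{\rm reg})\subseteq E^0_{\rm reg}.
\]

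The main point is (A2). Fix $v\in E^0$. Since $m$ is a quiver morphism, the commuting range-squares in \eqref{srdiag} give $m^1\bigl(r_E^{-1}(v)\bigr)\subseteq r_F^{-1}(m^0(v))$. Now $m^1|_{r_E^{-1}(v)}$ is injective by (A2) for $m$, and $n^1|_{r_F^{-1}(m^0(v))}$ is injective by (A2) for $n$, so their composition $k^1|_{r_E^{-1}(v)}=n^1|_{m^1(r_E^{-1}(v))}\circ m^1|_{r_E^{-1}(v)}$ is injective. For the measure equality, functoriality of the pushforward together with (A2) applied twice yields
\[
k^1_{*}(\lambda_E^v)=n^1_{*}\bigl(m^1_{*}(\lambda_E^v)\bigr)=n^1_{*}\bigl(\lambda_F^{m^0(v)}\bigr)=\lambda_G^{n^0(m^0(v))}=\lambda_G^{k^0(v)}.
\]
The only genuinely delicate point is invoking functoriality of the pushforward on Radon measures; since all spaces are second-countable locally compact Hausdorff and the maps are continuous (hence Borel), this is standard and can be verified on $C_c$-functions via \eqref{a2con} applied successively to $m$ and then to $n$. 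This establishes (A2) for $k$, and hence the composition is regular, completing the proof.
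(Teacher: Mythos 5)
Your proof is correct and follows essentially the same route as the paper's: identities are checked directly, (A1) follows from properness being closed under composition, (A2) combines the injectivity of the two restrictions (using that $m^1(r_E^{-1}(v))\subseteq r_F^{-1}(m^0(v))$) with functoriality of the pushforward, and (A3) is the same chain of inclusions. The only difference is that you spell out a few details (e.g.\ verifying the pushforward identity on $C_c$-functions) that the paper leaves as standard facts.
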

\begin{proof}
Clearly, ${\rm id}_E=({\rm id}_{E^0},{\rm id}_{E^1})$ is a regular quiver morphism.
Next, let $m:E\to F$ and $n:F\to G$ be regular quiver morphisms. We have to prove that $n\circ m$ is also a regular quiver morphism.
Since composition of proper maps is again proper, we only need to check the conditions (A2) and (A3) of Definition~\ref{admquiv}. 

To prove the first part of the condition (A2), take $v\in E^1$ and suppose that $n^1(m^1(e))=n^1(m^1(e'))$ for some $e,e'\in r_E^{-1}(v)$. Since $m^1(e),m^1(e')\in r_F^{-1}(m^0(v))$, the first part of (A2) for $n^1$ implies that $m^1(e)=m^1(e')$. Hence, the result follows from the same condition for $m^1$. Note that the second part of the condition (A2) follows from a basic property of the pushforward of a~measure along a~composition of maps, namely
\[
(n^1\circ m^1)_*(\lambda_E^v)=n^1_*(m^1_*(\lambda_E^v))
=n^1_*(\lambda_F^{m^0(v)})=\lambda_G^{(n^0\circ\, m^0)(v)},\qquad v\in E^0.
\]
Proving (A3) is also straightforward:
\[
(m^0)^{-1}((n^0)^{-1}(G^0_{\rm reg}))\subseteq (m^0)^{-1}(F^0_{\rm reg})\subseteq E^0_{\rm reg}.
\]
\end{proof}
We introduce the following notation:
\[
{\bf Q}_{\rm reg}:~\text{topological quivers and regular  quiver morphisms}.
\]

\subsection{Relation to factor maps and admissible graph homomorphisms}\label{apptop}

Let us recall the notion of a topological graph introduced by Katsura in~\cite{katsura1} (note that we use different convention, i.e. the source and range maps are interchanged). A {\em topological graph} $E$ is a~quadruple $(E^1,E^1,s_E,r_E)$, where $E^0$ and $E^1$ are locally compact topological spaces, $r_E:E^1\to E^0$ is a local homeomorphism and $s_E:E^1\to E^0$ is a continuous map. 

Next, we discuss the relation between topological graphs and topological quivers. Let $E^1$ and $E^0$ be second countable. For each $v\in E^0$ and every Borel set $B$ in $E^1$, we consider the counting measures
\[
\lambda^v_E(B):=\begin{cases} |B\cap r_E^{-1}(v)| & \text{if }B\cap r_E^{-1}(v)\text{ is finite}\\ \infty & \text{otherwise.}\end{cases}
\]
Since $r_E^{-1}(v)$ is countable, the above formula gives rise to a Radon measure on $E^1$ with support $r_E^{-1}(v)$. Then $(E^1,E^0,s_E,r_E,\lambda_E)$, where $\lambda_E:=\{\lambda^v_E\}_{v\in E^0}$, defines a topological quiver. Let us also mention that Katsura's definition of the set of regular vertices $E^0_{\rm reg}$ coincides with the same notion for topological quivers in the case of topological graphs.

Let $\widetilde{X}:=X\cup\{\infty\}$ denotes the one-point compactification of a locally compact space $X$.
\begin{definition}{\cite[Definition~2.1]{katsura2}}\label{factor}
A {\em factor map} $\widetilde{m}$ from a topological graph $E$ to a topological graph $F$ is a pair $(\widetilde{m}^1,\widetilde{m}^0)$ consisting of continuous maps $\widetilde{m}^0:\widetilde{E}^0\to\widetilde{F}^0$ and $\widetilde{m}^1:\widetilde{E}^1\to\widetilde{F}^1$ sending $\infty$ to $\infty$ and such that
\begin{enumerate}
\item[(F1)] For every $e\in E^1$ with $\widetilde{m}^1(e)\in F^1$, we have that $s_F(\widetilde{m}^1(e))=\widetilde{m}^0(s_E(e))$ and $r_F(\widetilde{m}^1(e))=\widetilde{m}^0(r_E(e))$. 
\item[(F2)] If $x\in F^1$ and $v\in E^0$ satisfies $r_F(x)=\widetilde{m}^0(v)$, then there exists a unique element $e\in E^1$ such that $\widetilde{m}^1(e)=x$ and $r_E(e)=v$.
\end{enumerate}
\end{definition}
\begin{definition}{\cite[Definition~2.6]{katsura2}}\label{regfactor}
A factor map $\widetilde{m}:E\to F$ is called {\em regular} if $s_E^{-1}(v)$ is non-empty and contained in $(\widetilde{m}^1)^{-1}(F^1)$ for every $v\in E^0$ with $\widetilde{m}^0(v)\in F^0_{\rm reg}$.
\end{definition}

Let us prove the equivalence of the notion of a regular factor map and a regular quiver morphism in the setting of topological graphs.
\begin{proposition}
Let $E$ and $F$ be topological graphs. A pair $(\widetilde{m}^1,\widetilde{m}^0)$ is a regular factor map if and only if $(m^1,m^0)$ is a regular quiver morphism.
\end{proposition}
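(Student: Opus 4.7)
My approach is to unpack, condition by condition, the correspondence between a regular factor map $(\widetilde{m}^1,\widetilde{m}^0)$ (taken with $\widetilde{m}^i(E^i)\subseteq F^i$, which is what pairs it with a quiver morphism) and the restricted pair $m^i:=\widetilde{m}^i|_{E^i}$. The standard topological fact that a continuous map $\widetilde{X}\to\widetilde{Y}$ sending $\infty$ to $\infty$ and no other point to $\infty$ is the same data as a proper continuous map $X\to Y$ immediately yields that condition (A1) is equivalent to the existence of such extensions. Condition (F1) applied to $e\in E^1$, where $\widetilde{m}^1(e)=m^1(e)\in F^1$, is then literally the commutativity of the source and range squares that define a quiver morphism, so the ``underlying morphism'' data already match.

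For the equivalence (F2)$\,\Leftrightarrow\,$(A2) I would evaluate the pushforward identity on singletons. Since $r_E,r_F$ are local homeomorphisms between second countable spaces, the fibres $r_E^{-1}(v)$ and $r_F^{-1}(m^0(v))$ are countable and $\lambda_E^v,\lambda_F^{m^0(v)}$ are the counting measures on them. For $x\in F^1$,
\[
m^1_*(\lambda_E^v)(\{x\})=|(m^1)^{-1}(\{x\})\cap r_E^{-1}(v)|,
\]
which is automatically zero unless $x\in r_F^{-1}(m^0(v))$ by the range diagram. Hence the pushforward equality, read on singletons, says exactly that every $x\in r_F^{-1}(m^0(v))$ has a unique $m^1$-preimage in $r_E^{-1}(v)$; this is precisely (F2), and its two halves correspond to the injectivity and the measure equality in (A2).

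Finally, for the equivalence between Katsura's regularity and (A3), in the forward direction I would proceed by pullback: given $v\in E^0$ with $m^0(v)\in F^0_{\rm reg}$, openness of $F^0_{\rm reg}$ yields a precompact open $U\subseteq F^0_{\rm reg}$ containing $m^0(v)$ with $s_F^{-1}(\overline{U})$ compact and $r_F|_U$ a local homeomorphism. Setting $V:=(m^0)^{-1}(U)$ and combining the identity $s_E^{-1}((m^0)^{-1}(\overline{U}))=(m^1)^{-1}(s_F^{-1}(\overline{U}))$ with properness of $m^0,m^1$ shows that $V$ is a precompact open neighbourhood of $v$ with $s_E^{-1}(\overline{V})$ compact, so $v\in E^0_{\rm fin}$. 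The regular factor map hypothesis gives $s_E^{-1}(w)\neq\emptyset$ for every $w\in V$, so $V\cap E^0_{\rm sink}=\emptyset$ and $v\notin\overline{E^0_{\rm sink}}$, hence $v\in E^0_{\rm reg}$. Conversely, (A3) gives $v\in E^0_{\rm reg}$, and the inclusion $s_E^{-1}(v)\subseteq(\widetilde{m}^1)^{-1}(F^1)$ is automatic since $\widetilde{m}^1$ sends only $\infty$ to $\infty$. The main obstacle will be the remaining claim that $v\in E^0_{\rm reg}$ forces $s_E^{-1}(v)\neq\emptyset$; my plan is to pick a precompact open $V\ni v$ with $s_E^{-1}(\overline{V})$ compact and $V\subseteq\overline{s_E(E^1)}$, note that $s_E(s_E^{-1}(\overline{V}))=s_E(E^1)\cap\overline{V}$ is closed (continuous image of a compact set in a Hausdorff space), and then combine this with the density of $s_E(E^1)\cap V$ in $V$ to conclude $V\subseteq s_E(E^1)$, so in particular $v\in s_E(E^1)$.
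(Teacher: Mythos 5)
Your proposal is correct and follows the same overall decomposition as the paper: (F1) matches the quiver-morphism diagrams, the one-point-compactification dictionary handles (A1), and the pushforward identity evaluated on singletons of the counting measures gives (F2)$\Leftrightarrow$(A2) exactly as in the paper's computation $1=\lambda_F^{m^0(v)}(\{x\})=|(m^1)^{-1}(x)\cap r_E^{-1}(v)|$. The one place you genuinely diverge is the regularity clause: the paper disposes of it by citing Katsura's Lemma~2.7 for the forward direction and declaring the converse ``clear,'' whereas you prove both directions from scratch. Your forward argument (pull back a precompact open $U\subseteq F^0_{\rm reg}$, use $s_E^{-1}((m^0)^{-1}(\overline{U}))=(m^1)^{-1}(s_F^{-1}(\overline{U}))$ and properness to get $v\in E^0_{\rm fin}$, then use non-emptiness of the $s_E$-fibres over $(m^0)^{-1}(U)$ to avoid $\overline{E^0_{\rm sink}}$) is a correct self-contained reproof of Katsura's lemma, and your converse correctly fills in the non-obvious point that the paper elides, namely that $v\in E^0_{\rm reg}$ forces $s_E^{-1}(v)\neq\emptyset$ (being outside $\overline{E^0_{\rm sink}}$ only puts $v$ in $\overline{s_E(E^1)}$; your compactness argument showing $s_E(E^1)\cap\overline{V}$ is closed is exactly what upgrades this to $v\in s_E(E^1)$). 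You are also more careful than the paper in flagging the standing assumption $\widetilde{m}^i(E^i)\subseteq F^i$, without which a factor map does not even restrict to a quiver morphism. The only detail left implicit is the (standard) openness of $F^0_{\rm reg}$ and the passage from equality of the two measures on singletons to equality on all Borel sets, both of which are immediate for counting measures on countable fibres.
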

\begin{proof}
First, note that the condition (F1) of Definition~\ref{factor} for $(\widetilde{m}^1,\widetilde{m}^0)$ is equivalent to $(m^1,m^0)$ being a quiver morphism. Next, for locally compact Hausdorff spaces $X$ and $Y$, it is well-known that if $\widetilde{f}:\widetilde{X}\to\widetilde{Y}$ is a continuous map sending $\infty$ to $\infty$, then $f:=\widetilde{f}|_X$ is a proper continuous map from $X$ to $Y$. Conversely, if $f:X\to Y$ is a proper continuous map, then extending $f$ to $\widetilde{f}$ by putting $\widetilde{f}(\infty)=\infty$ gives rise to a continuous map from $\widetilde{X}$ to $\widetilde{Y}$. Hence, $(\widetilde{m}^1,\widetilde{m}^0)$ is a pair of continuous maps sending $\infty$ to $\infty$ if and only if $(m^1,m^0)$ is a pair of proper continuous maps, i.e. satisfying the condition (A1) of Definition~\ref{admquiv}. 

Next, assume the condition (F2) of Definition~\ref{factor}. Then, clearly $m^1$ restricted to ${r_E^{-1}(v)}$ is injective for every $v\in E^0$. We need to prove that $\lambda_F^{m^0(v)}=m^1_*(\lambda_E^v)$ for every $v\in E^0$. Recall that $m^1_*(\lambda^v_E)(B)=\lambda_E^v((m^1)^{-1}(B))$ for every Borel set $B\subseteq E^1$. Due to the condition (F2), we have that $|B\cap r_F^{-1}(m^0(v))|=|(m^1)^{-1}(B)\cap r^{-1}_E(v)|$ so the claim follows. Now, assume the condition (A2) of Definition~\ref{admquiv}. Take any $x\in F^1$ and $v\in E^0$ such that $r_F(x)=m^0(v)$. Then 
\[
1=\lambda_F^{m^0(v)}(x)=m^1_*(\lambda_E^v)(x)=\lambda_E^v((m^1)^{-1}(x))=|(m^1)^{-1}(x)\cap r_E^{-1}(v)|.
\]
Hence, there exists a unique $e\in E^1$ such that $m^1(e)=x$ and $r_E(e)=v$.

Finally, let the factor map $(\widetilde{m}^1,\widetilde{m}^0)$ be regular. Then $(m^1,m^0)$ satisfies the condition (A3) due to~\cite[Lemma~2.7]{katsura2}. Conversely, the condition (A3) of Definition~\ref{admquiv} clearly implies regularity of $(\widetilde{m}^1,\widetilde{m}^0)$.
\end{proof}

\subsection{Contravariant functor from quivers to correspondences}

In this section, we show that regular admissible quiver morphisms between topological quivers lead to $*$-homomorphisms between their quiver C*-algebras. First, we need some preliminary results.
\begin{lemma}\label{quivcorr}
Let $m:E\to F$ be an regular quiver morphism. Then the assignments
\[
\mu^0:A_F\longrightarrow A_E,\qquad f\longmapsto f\circ m^0,
\]
 \[
 \mu^1:C_c(F^1)\longrightarrow C_c(E^1),\qquad \xi\longmapsto \xi\circ m^1,
 \]
give rise to a correspondence morphism $(\mu^1,\mu^0):(X_F,A_F)\to (X_E,A_E)$.
\end{lemma}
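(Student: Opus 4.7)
The plan is to verify well-definedness of both maps, check the correspondence morphism axioms on dense subspaces, and then extend $\mu^1$ to the completion $X_F$ by continuity.

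First I would establish that $\mu^0: A_F \to A_E$ is a $*$-homomorphism. Since $m^0$ is continuous, pre-composition with $m^0$ preserves pointwise multiplication and complex conjugation. The key point is that $\mu^0(f) \in C_0(E^0)$, which uses condition (A1): if $m^0$ is proper, then for any $\varepsilon > 0$ the set $\{v \in E^0 : |f(m^0(v))| \geq \varepsilon\} = (m^0)^{-1}(\{w \in F^0 : |f(w)| \geq \varepsilon\})$ is a preimage of a compact set under a proper map, hence compact. Next, $\mu^1$ sends $C_c(F^1)$ into $C_c(E^1)$ because $\mathrm{supp}(\xi \circ m^1) \subseteq (m^1)^{-1}(\mathrm{supp}\,\xi)$, compact again by properness of $m^1$ from (A1).

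Next I would verify (C1) on $C_c(F^1)$. For $f \in A_F$ and $\xi \in C_c(F^1)$, writing out both sides and using the commutativity $s_F \circ m^1 = m^0 \circ s_E$ from the left diagram in \eqref{srdiag},
\[
\mu^1(f \cdot \xi)(e) = f(s_F(m^1(e)))\,\xi(m^1(e)) = f(m^0(s_E(e)))\,\mu^1(\xi)(e) = (\mu^0(f) \cdot \mu^1(\xi))(e).
\]
For (C2), for $\xi, \eta \in C_c(F^1)$ and $v \in E^0$, the function $\overline{\xi}\eta$ lies in $C_c(F^1)$, so by \eqref{a2con},
\[
\mu^0(\langle \xi, \eta \rangle)(v) = \int_{r_F^{-1}(m^0(v))} \overline{\xi(x)}\,\eta(x)\, d\lambda_F^{m^0(v)}(x) = \int_{r_E^{-1}(v)} \overline{\xi(m^1(e))}\,\eta(m^1(e))\, d\lambda_E^v(e) = \langle \mu^1(\xi), \mu^1(\eta) \rangle(v).
\]
This is the heart of the argument; the intertwining of measures in (A2) is precisely what makes $\mu^1$ compatible with the inner products.

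Finally I would extend $\mu^1$ to all of $X_F$. Setting $\eta = \xi$ in (C2) yields $\langle \mu^1(\xi), \mu^1(\xi) \rangle = \mu^0(\langle \xi, \xi \rangle)$, and since $\mu^0$ is a $*$-homomorphism between commutative C*-algebras, it is norm-decreasing, giving $\|\mu^1(\xi)\|_{X_E} \leq \|\xi\|_{X_F}$ for $\xi \in C_c(F^1)$. Thus $\mu^1$ extends uniquely to a bounded linear map $X_F \to X_E$, and (C1)–(C2) pass to the closure by continuity of both sides in $\xi, \eta$. I do not expect a serious obstacle beyond the bookkeeping in (C2); once (A2) is recognized as a change-of-variables formula, every clause of Definition~\ref{cormor} follows directly, and condition (A3) plays no role here (it will be needed later only for covariance).
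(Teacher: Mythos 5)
Your proposal is correct and follows essentially the same route as the paper: well-definedness from properness (A1), the source-diagram commutativity for (C1), the change-of-variables identity \eqref{a2con} for (C2), and a norm bound to extend $\mu^1$ to $X_F$. The only (cosmetic) difference is that you obtain the contraction estimate from the fact that the $*$-homomorphism $\mu^0$ is norm-decreasing, whereas the paper computes $\sup_{v\in E^0}\langle\xi,\xi\rangle(m^0(v))\leq\|\xi\|^2$ directly; both give the same bound.
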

\begin{proof}
Since $m^1$ and $m^0$ are proper, $\mu^1$ and $\mu^0$ are well defined. Routine calculations show that $\mu^1$ is linear and that $\mu^0$ is a $*$-homomorphism.
The equation~\eqref{a2con} implies that
\begin{align}
\langle\mu^1(\xi),\mu^1(\eta)\rangle(v)&=\int_{r_E^{-1}(v)}\overline{\xi(m^1(e))}\eta(m^1(e))d\lambda_E^v(e)=\int_{r_F^{-1}(m^0(v))}\overline{\xi(x)}\eta(x)d\lambda_F^{m^0(v)}(x) \nonumber\\
&=\langle\xi,\eta\rangle(m^0(v))=\mu^0(\langle\xi,\eta\rangle)(v).\label{c2proof}
\end{align}
We infer that
\[
\|\mu^1(\xi)\|^2=\sup_{v\in E^0}\langle\mu^1(\xi),\mu^1(\xi)\rangle(v)=\sup_{v\in E^0}\langle\xi,\xi\rangle(m^0(v))=\sup_{w\in m^0(E^0)}\langle\xi,\xi\rangle(w)\leq \|\xi\|^2.
\]
Hence, $\mu^1$ is continuous and extends uniquely to a continuous linear map $X_F\to X_E$ which we denote by the same symbol. Due to continuity of $\mu^1$ and the inner product, \eqref{c2proof} also shows that the pair $(\mu^1,\mu^0)$ satisfies the condition (C2) of Definition~\ref{cormor}. Next, we check the condition (C1):
\begin{align*}
\mu^1(f\cdot\xi)(e)&=(f\cdot\xi)(m^1(e))=f(s_F(m^1(e)))\xi(m^1(e))\\&=f(m^0(s_E(e))\xi(m^1(e))=(\mu^0(f)\cdot\mu^1(\xi))(e),
\end{align*}
where $e\in E^1$, $f\in C_0(F^0)$, and $\xi\in C_c(F^1)$. By the continuity of $\mu^1$ and the left action, we conclude that (C1) holds for all elements of $X_F$. 
\end{proof}
\begin{remark}
Note that we did not need to use the condition (A3) of Definition~\ref{admquiv} to prove the above lemma.
\end{remark}

The next lemma is a technical result needed for proving that the correspondence morphism defined in Lemma~\ref{quivcorr} is covariant.
\begin{lemma}\label{c4lemma}
Let $m:E\to F$ be an regular quiver morphism and let $(\mu^1,\mu^0)$ be its associated correspondence morphism. If $g\in C_0(F^1)$ is such that for every $x\in {\rm osupp}\,g$ there is an open neighborhood $U_x$ of $x$ such that $r_F|_{U_x}:U_x\to r_F(U_x)$ is a homeomorphism, then
\[
(\mu^1)^{(1)}(\sigma_F(g))=\sigma_E(\mu^1(g)).
\]
Here $\sigma_E$ and $\sigma_F$ are the $*$-homomorphisms defined in~\cite[Lemma~3.6]{mt-05}.
\end{lemma}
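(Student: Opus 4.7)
My plan is to reduce, via a partition of unity argument and continuity, to the case where $g\in C_c(F^1)$ is supported inside a single chart $U\subseteq F^1$ on which $r_F|_U$ is a homeomorphism onto its image. For such $g$, the operator $\sigma_F(g)$ is a rank-one operator on $X_F$ by~\cite[Lemma~3.6]{mt-05}, on which $(\mu^1)^{(1)}$ acts explicitly; matching the result with $\sigma_E(\mu^1(g))$ then comes down to a single pointwise identity between the local measure-weight functions attached to $r_E$ and $r_F$, and it is there that the regularity condition~(A2) enters essentially.

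First, I would extend the cover $\{U_x\}_{x\in\,\mathrm{osupp}\,g}$ to an open cover of $F^1$ by adjoining $F^1\setminus\overline{\mathrm{osupp}\,g}$ and, using paracompactness, choose a locally finite subordinate partition of unity in $C_c(F^1)$. The resulting decomposition $g=\sum_i\psi_i g$ converges uniformly in $C_0(F^1)$, so by norm-continuity of $\mu^1$, $(\mu^1)^{(1)}$, $\sigma_E$, and $\sigma_F$, it suffices to prove the identity when $g$ is compactly supported in a single chart $U$ on which $r_F$ restricts to a homeomorphism.

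Second, I would show that $V:=(m^1)^{-1}(U)$ plays the analogous role on $E^1$, i.e.\ that $r_E|_V$ is a homeomorphism onto an open subset of $E^0$. Continuity is clear; openness of $r_E|_V$ follows from the openness of $r_E$ built into the definition of a topological quiver; and injectivity is a consequence of~(A2): if $r_E(e)=r_E(e')$ for $e,e'\in V$, then $r_F(m^1(e))=m^0(r_E(e))=r_F(m^1(e'))$ combined with injectivity of $r_F|_U$ forces $m^1(e)=m^1(e')$, and the first part of~(A2) then gives $e=e'$. In particular, $\mu^1(g)$ satisfies the hypothesis making $\sigma_E(\mu^1(g))$ well defined.

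The hard part is the following local identity between the continuous positive weights $h_F(x):=\lambda^{r_F(x)}_F(\{x\})$ on $U$ and $h_E(e):=\lambda^{r_E(e)}_E(\{e\})$ on $V$:
\[
h_E(e)=h_F(m^1(e)) \qquad \text{for all } e\in V.
\]
This is where both halves of~(A2) are used simultaneously: the pushforward relation $\lambda^{m^0(r_E(e))}_F=m^1_*(\lambda^{r_E(e)}_E)$ gives $h_F(m^1(e))=\lambda^{r_E(e)}_E((m^1)^{-1}(\{m^1(e)\}))$, while the support condition $\mathrm{supp}\,\lambda^{r_E(e)}_E=r_E^{-1}(r_E(e))$ combined with the fiberwise injectivity of $m^1$ collapses this to $\lambda^{r_E(e)}_E(\{e\})=h_E(e)$. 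Once this is in hand, writing $\sigma_F(g)=\theta_{\xi,\eta}$ for $\xi,\eta\in C_c(U)$ with $\xi\,\overline{\eta}\,h_F=g$ on $U$ yields $(\mu^1)^{(1)}(\sigma_F(g))=\theta_{\mu^1(\xi),\mu^1(\eta)}$, whose local multiplier on $V$ is $(\xi\circ m^1)\overline{(\eta\circ m^1)}h_E=(\xi\,\overline{\eta}\,h_F)\circ m^1=g\circ m^1=\mu^1(g)$, so this operator equals $\sigma_E(\mu^1(g))$, as required.
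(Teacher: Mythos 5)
Your argument is correct in substance but takes a genuinely different route at the key step, and it contains one small, repairable slip in the reduction. The paper also localizes via a partition of unity, but it keeps the computation on the $F$-side: it writes $\sigma_F(\zeta)=\sum_i\theta_{\xi_i,\eta_i}$ with $\eta_i=\sqrt{\chi_i}\,\lambda_F^{r_F(\cdot)}(\{\cdot\})^{-1}$, applies $(\mu^1)^{(1)}$ termwise, and evaluates the resulting operator on $f\in C_c(E^1)$ by choosing, via Tietze, some $h_e\in C_c(F^1)$ with $f=h_e\circ m^1$ on the fibre $r_E^{-1}(r_E(e))$ (possible by the fibrewise injectivity in (A2)), then transporting the fibre integral from $E$ to $F$ with the change-of-variables identity \eqref{a2con}. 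You instead pull everything back to $E$: you verify that $V=(m^1)^{-1}(U)$ is an $r_E$-chart and prove the atom-mass identity $\lambda_E^{r_E(e)}(\{e\})=\lambda_F^{r_F(m^1(e))}(\{m^1(e)\})$ from the pushforward condition in (A2) together with the support condition on $\lambda_E$, so that $\theta_{\mu^1(\xi),\mu^1(\eta)}$ is visibly multiplication by $\mu^1(g)$. The two uses of (A2) are equivalent in content (your identity is \eqref{a2con} tested on shrinking neighbourhoods of an atom), but your version is arguably more transparent and avoids the Tietze extension; the price is that you must check the chart property of $V$ and the continuity and strict positivity of the weight $e\mapsto\lambda_E^{r_E(e)}(\{e\})$ on $V$, and you do address the former correctly (the latter is the content of \cite[Lemma~1.15 and 1.16]{katsura1}, which the paper's proof also invokes).

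The slip: the family $\{U_x\}_{x\in\mathrm{osupp}\,g}\cup\{F^1\setminus\overline{\mathrm{osupp}\,g}\}$ need not cover $F^1$, since points of $\overline{\mathrm{osupp}\,g}\setminus\mathrm{osupp}\,g$ can be missed; hence the subordinate partition of unity may not exist as stated. Repair it as the paper does: first approximate $g$ uniformly by $\zeta\in C_c(F^1)$ with $\mathrm{supp}\,\zeta\subseteq\mathrm{osupp}\,g$ (for instance $\zeta=\phi g$ with $\phi$ a Urysohn function equal to $1$ on $\{|g|\geq\varepsilon\}$ and compactly supported in $\mathrm{osupp}\,g$); then $\mathrm{supp}\,\zeta$ is compact and covered by finitely many of the charts $U_x$, and your finite partition-of-unity decomposition together with the norm-continuity of $\mu^1$, $(\mu^1)^{(1)}$, $\sigma_E$, and $\sigma_F$ completes the reduction exactly as you intend.
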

\begin{proof}
First, recall that for every $g\in C_0(F^1)$ and $\varepsilon>0$ there is a function $\zeta\in C_c(F^1)$ which is $\varepsilon$-close to $g$ in sup-norm and such that ${\rm osupp}\,\zeta\subseteq{\rm osupp}\,g$ (see, e.g.~\cite[Lemma~3.10]{mt-05}). Since $(\mu^1)^{(1)}\circ\sigma_F$ and $\sigma_E\circ\mu^1$ are continuous, it suffices to prove the claim for $\zeta$ as above. 
Using our assumptions on $g$ and arguing as in the proof of~\cite[Theorem~3.11]{mt-05} (see also~\cite[Lemma~1.15 and 1.16]{katsura1}), we conclude that $\sigma_F(\zeta)=\sum_{i=1}^m\theta_{\xi_i,\eta_i}$ for $\xi_i,\eta_i\in C_c(F^1)$, $i=1,\ldots,m$, defined as follows:
\begin{itemize}
\item Since $\zeta$ is compactly supported, there exist finitely many $x_1,\ldots,x_m\in F^1$ and their open neighborhoods $U_1,\ldots, U_m$ such that $r_F$ is injective when restricted to them and such that ${\rm supp}\,\zeta\subseteq\bigcup_{i=1}^mU_i$. 
\item Next, let $\chi_i$ be the compactly supported partition of unity associated to the open cover $\{U_i\}_{i=1}^m$.
\item Finally, put $\xi_i(x):=\zeta(x)\sqrt{\chi_i(x)}$ and  $\eta_i(x):=\sqrt{\chi_i(x)}\lambda^{r_F(x)}_F(\{x\})^{-1}$ for all $x\in F^1$ and $i=1,\ldots, m$. One can prove that $\xi_i,\eta_i\in C_c(F^1)$ and that ${\rm supp}\,\xi_i={\rm supp}\,\eta_i\subseteq U_i$, $i=1,\ldots, m$. Furthermore, we have that
\begin{equation}\label{partint}
\xi_i(x)\int_{r_F^{-1}(r_F(x))}\overline{\eta_i(x')}h(x')d\lambda_F^{r_F(x)}(x')=\xi_i(x)\sqrt{\chi_i(x)}h(x)
\end{equation}
for every $x\in F^1$ and $h\in C_c(F^1)$.
\end{itemize} 
Note that $r_E^{-1}(r_E(e))\subseteq E^1$ is closed for every $e\in E^1$ and that $m^1$ restricted to $r_E^{-1}(r_E(e))$ is injective by the condition (A2). Take any $f\in C_c(E^1)$ and $e\in E^1$. Then, by Tietze extension theorem, there is $h_e\in C_c(F^1)$ such that $f=h_e\circ m^1$ on $r_E^{-1}(r_E(e))$. Therefore, for any $f\in C_c(E^1)$, we have that
\begin{align*}
\left((\mu^1)^{(1)}(\sigma_F(\zeta))f\right)(e)&=\left((\mu^1)^{(1)}\left(\sum_{i=1}^m\theta_{\xi_i,\eta_i}\right)f\right)(e)=\left(\sum_{i=1}^m\theta_{\mu^1(\xi_i),\mu^1(\eta_i)}f\right)(e)\\
&=\sum_{i=1}^m(\mu^1(\xi_i))(e)\langle\mu^1(\eta_i),f\rangle(r_E(e))\\
&=\sum_{i=1}^m\xi_i(m^1(e))\int_{r_E^{-1}(r_E(e))}\overline{\eta_i(m^1(e'))}h_e(m^1(e'))d\lambda^{r_E(e)}_E(e')\\
&=\sum_{i=1}^m\xi_i(m^1(e))\int_{r_F^{-1}(r_F(m^1(e)))}\overline{\eta_i(x)}h_e(x)d\lambda^{r_F(m^1(e))}_F(x)\\
&=\sum_{i=1}^m\xi_i(m^1(e))\sqrt{\chi_i(m^1(e))}f(e)=\zeta(m^1(e))f(e)=(\sigma_E(\mu^1(\zeta))f)(e).
\end{align*}
Here we used~\eqref{a2con} and~\eqref{partint}.
\end{proof}
\begin{lemma}\label{quivcov}
Let $m:E\to F$ be a regular quiver morphism. Then the correspondence morphism $(\mu^1,\mu^0)$ defined in Lemma~\ref{quivcorr} is covariant.
\end{lemma}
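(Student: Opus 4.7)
The plan is to verify the two covariance conditions (C3) and (C4) of Definition~\ref{cormor} for the pair $(\mu^1,\mu^0)$, relying on the Muhly--Tomforde identification $J_{X_F}=C_0(F^0_{\rm reg})$ (and the analogous one for $E$). Condition (C3) should be essentially immediate from (A3): any $f\in J_{X_F}$ is supported in $F^0_{\rm reg}$, so $\mu^0(f)=f\circ m^0$ is supported in $(m^0)^{-1}(F^0_{\rm reg})\subseteq E^0_{\rm reg}$, placing $\mu^0(f)$ in $J_{X_E}$.

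The substance of the proof is condition (C4), namely $\varphi_E(\mu^0(f))=(\mu^1)^{(1)}(\varphi_F(f))$ for all $f\in J_{X_F}$. The strategy is to translate both sides into the language of the $*$-homomorphisms $\sigma_E,\sigma_F$ from~\cite[Lemma~3.6]{mt-05}, which satisfy $\varphi_E(h)=\sigma_E(h\circ s_E)$ for $h\in C_0(E^0)$ (and similarly for $F$). The right-hand side becomes $(\mu^1)^{(1)}(\sigma_F(f\circ s_F))$, while the source-intertwining identity $m^0\circ s_E=s_F\circ m^1$ from diagram~\eqref{srdiag} lets me rewrite the left-hand side as
\[
\varphi_E(\mu^0(f))=\sigma_E(f\circ m^0\circ s_E)=\sigma_E(f\circ s_F\circ m^1)=\sigma_E(\mu^1(f\circ s_F)).
\]
Condition (C4) thus reduces to the identity $\sigma_E(\mu^1(g))=(\mu^1)^{(1)}(\sigma_F(g))$ for $g=f\circ s_F$, which is exactly the conclusion of Lemma~\ref{c4lemma}.

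The remaining task is to verify the hypothesis of Lemma~\ref{c4lemma} for $g=f\circ s_F$ when $f\in J_{X_F}$. Since ${\rm osupp}(f\circ s_F)\subseteq s_F^{-1}({\rm osupp}\,f)\subseteq s_F^{-1}(F^0_{\rm reg})\subseteq s_F^{-1}(F^0_{\rm fin})$, for each $x\in{\rm osupp}(g)$ the vertex $s_F(x)$ lies in $F^0_{\rm fin}$; the precompact neighborhood of $s_F(x)$ supplied by that definition then produces an open neighborhood of $x$ in $F^1$ on which $r_F$ is a local homeomorphism, and hence a (possibly smaller) open neighborhood on which $r_F$ is a homeomorphism onto its image. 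I expect this last step --- transferring the vertex-level regularity condition to the edge-level local homeomorphism property demanded by Lemma~\ref{c4lemma} --- to be the main technical obstacle, although it is essentially an unwinding of the definition of $F^0_{\rm fin}$ once the lemma is in place.
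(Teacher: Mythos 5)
Your proof follows essentially the same route as the paper: (C3) is deduced from (A3) together with the identification $J_{X}=C_0(\cdot^0_{\rm reg})$, and (C4) is obtained by rewriting both sides via $\sigma_E,\sigma_F$ and the source-intertwining relation $m^0\circ s_E=s_F\circ m^1$, so that everything reduces to Lemma~\ref{c4lemma} applied to $g=f\circ s_F$. Your explicit verification that $g=f\circ s_F$ satisfies the hypothesis of Lemma~\ref{c4lemma} (using ${\rm osupp}(f\circ s_F)\subseteq s_F^{-1}(F^0_{\rm reg})\subseteq s_F^{-1}(F^0_{\rm fin})$) is a detail the paper leaves implicit behind its citation of \cite[Theorem~3.11]{mt-05}, and it is correct.
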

\begin{proof}
To prove the condition (C3) of Definition~\ref{cormor}, recall that $J_{X_E}=C_0(E^0_{\rm reg})$ for any quiver correspondence $(X_E,A_E)$. Hence, (C3) follows directly from the condition (A3) of Definition~\ref{admquiv}. Indeed, if $f\in C_0(F^0_{\rm reg})$ then $f(w)=0$ for all $w\notin F^0_{\rm reg}$. Now, the condition (A3) implies that $m^0(v)\notin F^0_{\rm reg}$ for any $v\notin E^0_{\rm reg}$. Hence, $\mu^0(f)(v)=f(m^0(v))=0$ for any $v\notin E^0_{\rm reg}$, which implies that $\mu^0(f)\in C_0(E^0_{\rm reg})$ for all $f\in C_0(F^0_{\rm reg})$. Finally, we prove the condition (C4). Due to~\cite[Theorem 3.11]{mt-05}, Lemma~\ref{c4lemma}, and the fact that $\varphi_E(f)=\sigma_E(f\circ s_E)$, it suffices to prove that
\[
\mu^1(f\circ s_F)=\mu^0(f)\circ s_E
\]
for all $f\in C_0(E^0_{\rm reg})$. If $s_E(e)\notin E^0_{\rm reg}$, then $m^0(s_E(e))\notin F^0_{\rm reg}$, so that $\mu^0(f)(s_E(e))=0=f(m^0(s_E(e)))$. Therefore, we have that
\[
\mu^1(f\circ s_F)(e)=f(s_F(m^1(e)))=f(m^0(s_E(e)))=\mu^0(f)(s_E(e)),\qquad e\in E^1.
\]

\end{proof}

\begin{theorem}\label{mainthm}
Let $m:E\to F$ be an regular quiver morphism. Then  there exists a gauge-equivariant $*$-homomorphism $\mu:\mathcal{O}_F\to\mathcal{O}_E$ such that the assignments
\[
E\longmapsto \mathcal{O}_E,\qquad m\longmapsto \mu,
\]
give rise to a contravariant functor from the category ${\bf Q}_{\rm reg}$ to the category ${\bf C}^*_{\mathbb{T}}$ of $\mathbb{T}$-C*-algebras and $\mathbb{T}$-equivariant $*$-homomorphisms. If $E^0$ and $F^0$ are compact, then $\mu$ is unital.
\end{theorem}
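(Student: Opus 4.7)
The plan is to obtain $\mu$ by composing the constructions already in place and then to verify the two functorial axioms together with the unitality claim.

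First, given a regular quiver morphism $m:E\to F$, \Cref{quivcorr} and \Cref{quivcov} together produce a covariant correspondence morphism $(\mu^1,\mu^0):(X_F,A_F)\to (X_E,A_E)$. Feeding this into \Cref{quivcorrfunct} yields a gauge-equivariant $*$-homomorphism $\mu:\mathcal{O}_F\to\mathcal{O}_E$ characterized uniquely by $\mu(u_{X_F}(\xi))=u_{X_E}(\mu^1(\xi))$ and $\mu(u_{A_F}(f))=u_{A_E}(\mu^0(f))$. Notice that the direction is reversed compared to $m$, which is precisely what makes the assignment $m\mapsto\mu$ contravariant.

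Next, I would verify functoriality on the nose. For the identity morphism $\mathrm{id}_E$ on a topological quiver $E$, the associated maps $\mu^0=\mathrm{id}_{A_E}$ and $\mu^1=\mathrm{id}_{X_E}$ are clearly the identities of the correspondence $(X_E,A_E)$, so by the uniqueness part of \Cref{quivcorrfunct} we obtain $\mu=\mathrm{id}_{\mathcal{O}_E}$. For composition, let $m:E\to F$ and $n:F\to G$ be regular quiver morphisms, and let $(\mu^1,\mu^0)$ and $(\nu^1,\nu^0)$ be the associated covariant correspondence morphisms, inducing $\mu:\mathcal{O}_F\to\mathcal{O}_E$ and $\nu:\mathcal{O}_G\to\mathcal{O}_F$. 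The correspondence morphism attached to $n\circ m$ sends $\xi\in C_c(G^1)$ to $\xi\circ(n^1\circ m^1)=(\xi\circ n^1)\circ m^1=\mu^1(\nu^1(\xi))$, and analogously on the $C_0$-level, so it coincides with the composition $(\mu^1\circ\nu^1,\mu^0\circ\nu^0)$. Invoking \Cref{quivcorrfunct} once more and using uniqueness of the induced map on universal generators, the $*$-homomorphism $\mathcal{O}_G\to\mathcal{O}_E$ associated with $n\circ m$ equals $\mu\circ\nu$. This gives a well-defined contravariant functor ${\bf Q}_{\rm reg}\to{\bf C}^*_{\mathbb T}$.

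Finally, if $E^0$ and $F^0$ are both compact, then $A_E=C(E^0)$ and $A_F=C(F^0)$ are unital with units the constant function $1$, and $\mu^0(1_{A_F})=1\circ m^0=1_{A_E}$. Since $u_{A_E}(1_{A_E})$ is the unit of $\mathcal{O}_E$ (and similarly for $F$), the defining relation $\mu(u_{A_F}(1))=u_{A_E}(\mu^0(1))=u_{A_E}(1)$ shows that $\mu$ preserves units. I expect no substantial obstacle in any of these verifications: all the analytic content has already been absorbed into Lemmas~\ref{quivcorr} and~\ref{c4lemma}, and what remains is essentially tracking identities of function compositions and invoking the uniqueness statement of \Cref{quivcorrfunct}.
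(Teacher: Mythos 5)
Your proposal is correct and follows essentially the same route as the paper: combine Lemmas~\ref{quivcorr} and~\ref{quivcov} to get a covariant correspondence morphism, check that the assignment is a contravariant functor into ${\bf Corr}^*_{\rm cov}$, and compose with the functor of Proposition~\ref{quivcorrfunct}. You in fact spell out the identity/composition checks and the unitality claim in more detail than the paper, which simply calls these steps straightforward (and omits the unitality argument entirely).
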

\begin{proof}
First, note that the assignments $E\mapsto (X_E,A_E)$ and $(m^1,m^0)\mapsto (\mu^1,\mu^0)$ define a contravariant functor from ${\bf Q}_{\rm reg}$ to ${\bf Corr}^*_{\rm cov}$. Indeed, combining Lemma~\ref{quivcorr} and Lemma~\ref{quivcov}, we have that $(\mu^1,\mu^0)$ is a covariant correspondence morphism. Hence, the aforementioned assignments are well defined. Next, since the assignment $m^0\mapsto \mu^0$ is simply the Gelfand--Naimark contravariant functor and showing that the assingment $m^1\mapsto \mu^1$ gives a contravariant functor is straightforward. Finally, we compose the above functor with the covariant functor from the category ${\bf Corr}_{\rm cov}^*$ to the category ${\bf C}_\mathbb{T}^*$ described in~Proposition~\ref{quivcorrfunct} to obtain the desired contravariant functor from $\bf{Q}_{\rm reg}$ to $\bf{C}^*_\mathbb{T}$. 
\end{proof}

\section*{Acknowledgements} 
\noindent
This work is part of the project “Applications of graph algebras and higher-rank graph algebras
in noncommutative geometry” partially supported by NCN grant UMO-2021/41/B/ST1/03387. The author would like to thank Matthew Gillaspie and Ben Jones for discussions on morphism of topological quivers.


\bibliographystyle{plain}
\bibliography{join}

\Addresses

\end{document}